\newcommand{\N}{\mathbb{N}}
\newcommand{\sub}{\subseteq}
\def\epsilon{\varepsilon}
\newtheorem{theo}{Theorem}[section]
\newtheorem{lem}[theo]{Lemma}
\numberwithin{equation}{section}
\title{On vector measures with values in $c_0(\kappa)$}
\author{Jos\'{e} Rodr\'{i}guez}
\address{Dpto. de Matem\'{a}ticas\\E.T.S. de Ingenieros Industriales de Albacete\\
Universidad de Castilla-La Mancha\\ 02071 Albacete\\ Spain} 
\email{jose.rodriguezruiz@uclm.es, joserr@um.es}
\subjclass[2020]{Primary: 46E30, 46G10. Secondary: 46B26}
\keywords{Vector measure; space of integrable functions; non-separable Banach space}
\thanks{The research was supported by grants PID2021-122126NB-C32 
(funded by MCIN/AEI/10.13039/501100011033 and ``ERDF A way of making Europe'', EU) and 
21955/PI/22 (funded by {\em Fundaci\'on S\'eneca - ACyT Regi\'{o}n de Murcia}).}
\begin{document}

\begin{abstract}
Let $\nu$ be a vector measure defined on a $\sigma$-algebra~$\Sigma$ and taking values in a Banach space. 
We prove that if $\nu$ is homogeneous and $L_1(\nu)$ is non-separable, 
then there is a vector measure $\tilde{\nu}:\Sigma \to c_0(\kappa)$ such that $L_1(\nu)=L_1(\tilde{\nu})$ 
with equal norms, where $\kappa$ is the density character of~$L_1(\nu)$. 
This is a non-separable version of a result of [G.P. Curbera, Pacific J. Math. 162 (1994), no.~2, 287--303].
\end{abstract}

\maketitle

\section{Introduction}

Spaces of integrable functions with respect to a vector measure play an important role in Banach lattices and operator theory. 
Every Banach lattice with order continuous norm and a weak unit is lattice-isometric to the $L_1$ space of some vector measure,
\cite[Theorem~8]{cur1} (cf. \cite[Proposition~2.4]{dep-alt}). Such a representation is not unique, in the sense that a Banach lattice
can be lattice-isometric to the $L_1$ spaces of completely different vector measures. 
The following result was proved in~\cite[Theorem~1]{cur2} (cf. \cite[Theorem~5]{lip} for a different proof): 

\begin{theo}[G. P. Curbera]\label{theo:Curbera}
Let $\nu$ be a vector measure defined on a $\sigma$-algebra~$\Sigma$ and taking values in a Banach space. 
If $\nu$ is atomless and $L_1(\nu)$ is separable, then there is a vector measure $\tilde{\nu}:\Sigma\to c_0$ such that 
$L_1(\nu)=L_1(\tilde{\nu})$ with equal norms.
\end{theo}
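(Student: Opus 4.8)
The plan is to reduce the supremum defining the $L_1(\nu)$-norm to a countable one, and then to encode that countable family of scalar measures as the coordinates of a single $c_0$-valued measure, using atomlessness to force the coordinates into $c_0$ without altering the norm. First, fix a Rybakov control measure $\lambda=|x_0^*\nu|$ (a finite nonnegative measure with $\nu\ll\lambda$), so that $|x^*\nu|\ll\lambda$ for every $x^*\in X^*$ and $\|f\|_{L_1(\nu)}=\sup_{x^*\in B_{X^*}}\int|f|\,d|x^*\nu|$. Using that $L_1(\nu)$ is separable, I would take a countable dense set $\{f_k\}$ and, for each $k$ and each rational $\epsilon>0$, an $x^*_{k,\epsilon}\in B_{X^*}$ nearly attaining the supremum at $f_k$; writing $(x_n^*)_n$ for an enumeration of these functionals (with $x_0^*$ included) and $\lambda_n:=|x_n^*\nu|$, the estimate $|\int|f|\,d|x^*\nu|-\int|g|\,d|x^*\nu||\le\|f-g\|_{L_1(\nu)}$, valid uniformly in $x^*\in B_{X^*}$, together with density yields
\[
\|f\|_{L_1(\nu)}=\sup_n\int|f|\,d\lambda_n\qquad\text{for all }f\in L_1(\nu).
\]
Each $\lambda_n$ is atomless (since $\nu$, hence $x_n^*\nu$, is atomless), satisfies $\lambda_n\ll\lambda$, and the family $\{\lambda_n\}$ is uniformly $\lambda$-continuous, a standard consequence of the countable additivity of $\nu$ and the Rybakov domination.

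The key point is that for a sequence $(\mu_n)$ of scalar measures the formula $\tilde\nu(A)=(\mu_n(A))_n$ defines a $c_0$-valued vector measure with $\|f\|_{L_1(\tilde\nu)}=\sup_n\int|f|\,d|\mu_n|$, provided $(\mu_n(A))_n\in c_0$ for each $A$ and $\{|\mu_n|\}$ is uniformly $\lambda$-continuous (the latter making the series countably additive in the $c_0$-norm). I therefore want scalar measures $\mu_n$ with $|\mu_n|=\lambda_n$, so that $\int|f|\,d|\mu_n|=\int|f|\,d\lambda_n$ and the norm is reproduced exactly, but with $\mu_n(A)\to0$ for every $A$. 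Here atomlessness enters decisively: I would enumerate a countable subalgebra $\{A_i\}$ generating $\Sigma$ up to $\lambda$-null sets, let $\mathcal{A}_n$ be the finite algebra generated by $A_1,\dots,A_n$, and split each of its finitely many cells into two halves of equal $\lambda_n$-measure (Lyapunov's theorem). Setting $r_n=+1$ on one union of halves and $-1$ on the other and $d\mu_n=r_n\,d\lambda_n$ gives $|\mu_n|=\lambda_n$ together with $\mu_n(A_i)=0$ for all $i\le n$; thus $\mu_n(A_i)\to0$ for each fixed $i$, and uniform $\lambda$-continuity of $\{\lambda_n\}$ upgrades this to $\mu_n(A)\to0$ for every $A\in\Sigma$.

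I expect the main obstacle to be precisely this $c_0$-valuedness: reproducing the full norm forces each coordinate $\mu_n$ to carry the whole mass $\lambda_n(\Omega)$, which naively obstructs setwise convergence to $0$. The resolution is the observation that $|\mu_n|$ does not depend on the sign pattern $r_n$, so atomlessness buys the setwise decay ``for free'' while leaving the norm untouched; this is exactly the point at which the hypothesis is used.

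The final step is to identify $L_1(\nu)=L_1(\tilde\nu)$ as sets of functions. Both are order-continuous K\"othe function spaces over $(\Omega,\Sigma,\lambda)$ with the same null ideal; note that $\lambda=|\mu_{n_0}|$ for the index $n_0$ with $x_{n_0}^*=x_0^*$, so $\lambda$ is a Rybakov control measure for $\tilde\nu$ as well. In each space the simple functions form a dense ideal, and the two norms agree on simple functions by the displayed formula. Consequently the spaces coincide with equal norm. I would present this last identification briefly, since it is routine given the standard description of $L_1$ of a vector measure as the norm-closure of the simple functions inside $L_0(\lambda)$.
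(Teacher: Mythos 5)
Your proof is correct, but note first that the paper does not actually prove Theorem~\ref{theo:Curbera}: it quotes it from \cite{cur2}, and its own machinery is aimed at the non-separable analogue, Theorem~\ref{theo:general}. Compared with that proof, your route differs in how $c_0$-valuedness is achieved. The paper transfers everything to the Cantor cube via Maharam's theorem and multiplies each norming density $h_\alpha$ by an independent Rademacher coordinate $\Phi^{-1}(\pi^\kappa_{\psi(\alpha)})$, so that stochastic independence kills $\int_A g_\alpha\,d\mu$ for all but finitely many indices when $A$ comes from a clopen set. You instead build the oscillating signs $r_n$ by hand, halving the cells of the finite algebras $\mathcal{A}_n$ generated by a countable dense family $\{A_i\}$ --- a family that exists precisely because $L_1(\nu)$, hence $L_1(\lambda)$, is separable --- and then use uniform $\lambda$-continuity of $\{\lambda_n\}$ to upgrade $\mu_n(A_i)=0$ for $i\le n$ to $\mu_n(A)\to 0$ for every $A\in\Sigma$. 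Your approach buys elementarity: no Maharam theorem and no homogeneity hypothesis, which matters since Theorem~\ref{theo:Curbera} assumes only atomlessness. What it gives up is generality: an increasing sequence of finite algebras can never be dense in a non-separable measure algebra, which is exactly why the paper needs the Cantor-cube structure and the injection $\psi$ for Theorem~\ref{theo:general}. Both arguments conclude identically, via the supremum formula of Lemma~\ref{lem:NormLINFTYvalued} and Lemma~\ref{lem:Lipecki}.

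Three small points to tighten. First, atomlessness of each $\lambda_n=|x_n^*\nu|$ is not literally inherited ``by composition'': a non-$\nu$-null set can be $|x_n^*\nu|$-null, so splitting it $\nu$-nontrivially proves nothing about $\lambda_n$. The correct route is that $\lambda_n\ll\lambda$ with the Rybakov measure $\lambda$ atomless (the paper's remark~(ii) in Section~\ref{section:preliminaries}), and a finite measure absolutely continuous with respect to an atomless measure is atomless. Second, Lyapunov's convexity theorem is overkill: Sierpi\'{n}ski's intermediate-value property of the single atomless measure $\lambda_n$ suffices to halve each cell. Third, your final ``routine identification'' is exactly Lemma~\ref{lem:Lipecki} applied twice with $c=1$; citing it makes the step airtight, since what must be checked is equality of the two spaces as sets of measurable functions, not merely an abstract isometry extending from the simple functions.
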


In general, this result is not valid for vector measures with atoms, as shown in~\cite[pp.~294--295]{cur2}.
It is natural to ask about non-separable versions of Theorem~\ref{theo:Curbera}
by using $c_0(\kappa)$ as target space for a large enough cardinal~$\kappa$. This question
was posed by Z.~Lipecki at the conference ``Integration, Vector Measures and Related Topics VI'' (Bed\l ewo, June 2014).
In~\cite{rod16} we provided some partial answers by using a certain superspace of~$c_0(\kappa)$, namely, the so-called
Pe\l czy\'{n}ski-Sudakov space. In the particular case $\kappa=\aleph_1$ (the first uncountable cardinal), this is the Banach space $\ell_\infty^c(\aleph_1)$ of all
bounded real-valued functions on~$\aleph_1$ with countable support.
   
In this note we refine the results of~\cite{rod16} by proving the following:

\begin{theo}\label{theo:general}
Let $\nu$ be a vector measure defined on a $\sigma$-algebra~$\Sigma$ and taking values in a Banach space. 
If $\nu$ is homogeneous and $L_1(\nu)$ is non-separable, then there is a vector measure $\tilde{\nu}:\Sigma\to c_0(\kappa)$ such that 
$L_1(\nu)=L_1(\tilde{\nu})$ with equal norms, where $\kappa$ is the density character of~$L_1(\nu)$.
\end{theo}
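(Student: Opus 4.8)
The plan is to reduce the statement to a purely function-space construction and then to build the target measure by \emph{modulating} a norming family of weights. First I would fix a Rybakov control measure for $\nu$: by the Bartle--Dunford--Schwartz theorem there is $\lambda=|x_0^*\nu|$, a finite nonnegative measure on $\Sigma$ with the same null sets as $\nu$, which I normalise to a probability. Writing $Y:=L_1(\nu)$, this realises $Y$ as an order continuous Banach function space over $(\Omega,\Sigma,\lambda)$ with weak unit $\chi_\Omega$, and its norm is $\|f\|_Y=\sup_{x^*\in B_{X^*}}\int|f|\,h_{x^*}\,d\lambda$, where $h_{x^*}\ge 0$ is the Radon--Nikod\'ym density of $|x^*\nu|$ with respect to~$\lambda$. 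Thus the set of weights $W:=\{h_{x^*}:x^*\in B_{X^*}\}\sub B_{Y'}^+$ norms $Y$, and, because $\nu$ is countably additive, the measures $\{x^*\nu\}$ are uniformly countably additive, so $W$ is uniformly integrable, i.e.\ relatively weakly compact in $L_1(\lambda)$.

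Next I would recast the goal. Identifying a measure $\tilde\nu\colon\Sigma\to c_0(\kappa)$ with its coordinate densities $g_\gamma$ (so that $\tilde\nu(A)=(\int_A g_\gamma\,d\lambda)_{\gamma<\kappa}$), the space $L_1(\tilde\nu)$ carries the norm $\sup_{(a_\gamma)\in B_{\ell_1(\kappa)}}\int|f|\,|\sum_\gamma a_\gamma g_\gamma|\,d\lambda$, since $c_0(\kappa)^*=\ell_1(\kappa)$. The crucial algebraic observation is that if each $g_\gamma$ has the form $g_\gamma=h_\gamma\,r_\gamma$ with $h_\gamma\ge 0$ and $|r_\gamma|=1$ $\lambda$-a.e., then $|\sum_\gamma a_\gamma g_\gamma|\le\sum_\gamma|a_\gamma|\,h_\gamma$ pointwise, whence $\int|f|\,|\sum_\gamma a_\gamma g_\gamma|\,d\lambda\le\sup_\gamma\int|f|\,h_\gamma\,d\lambda$; the reverse inequality comes from single coordinates because $|r_\gamma|=1$. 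Consequently $\|f\|_{L_1(\tilde\nu)}=\sup_\gamma\int|f|\,h_\gamma\,d\lambda$, and this equals $\|f\|_Y$ as soon as $\{h_\gamma\}_{\gamma<\kappa}$ is a norming subfamily of~$W$. Such a family of cardinality $\kappa=\mathrm{dens}(Y)$ exists because simple functions are dense in $Y$ and $W$ norms them. So the isometry $L_1(\nu)=L_1(\tilde\nu)$ is essentially free; everything hinges on choosing the unimodular modulators $r_\gamma$ so that $\tilde\nu$ genuinely takes values in $c_0(\kappa)$, that is, so that for each $A\in\Sigma$ and each $\epsilon>0$ the set $\{\gamma:|\int_A h_\gamma r_\gamma\,d\lambda|>\epsilon\}$ is \emph{finite}, and so that $\tilde\nu$ is countably additive.

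To produce the modulators I would invoke homogeneity: the measure algebra of $(\Omega,\Sigma,\lambda)$ is Maharam homogeneous of type $\kappa$, hence isomorphic to that of $\{0,1\}^\kappa$ with the product measure. This furnishes an independent family $(r_\xi)_{\xi<\kappa}$ of mean-zero $\{-1,+1\}$-valued functions (the coordinate Rademacher functions) together with the decisive structural fact that every $A\in\Sigma$ depends, modulo $\lambda$-null sets, on only countably many coordinates. Assigning to each $\gamma$ a modulator $r_{\phi(\gamma)}$ on a coordinate $\phi(\gamma)$ outside the (countable) support of $h_\gamma$, independence forces $\int_A h_\gamma r_{\phi(\gamma)}\,d\lambda=0$ whenever $\phi(\gamma)$ lies outside the countable support of~$A$; choosing $\phi$ injective, this leaves only countably many potentially nonzero coordinates for each fixed~$A$. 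This already recovers, with target the Pe\l czy\'nski--Sudakov space $\ell_\infty^c(\kappa)$, the result of \cite{rod16}.

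The main obstacle is exactly the upgrade from \emph{countable} to \emph{finite} $\epsilon$-sections, which is what separates $c_0(\kappa)$ from $\ell_\infty^c(\kappa)$. For a fixed $A$ the surviving coefficients are the oscillation coefficients $\int_A h_\gamma r_{\phi(\gamma)}\,d\lambda=\langle\chi_A h_\gamma,r_{\phi(\gamma)}\rangle_{L_2(\lambda)}$ taken along the \emph{distinct} coordinates $\phi(\gamma)\in\mathrm{supp}(A)$, and I must force these to be square-summable (or at least to vanish along the index), so that only finitely many exceed~$\epsilon$. The way I would secure this is to organise the construction along a product decomposition $\{0,1\}^\kappa\cong\prod_{i<\kappa}\Omega_i$ into separable homogeneous factors, running within each factor a Curbera-type \cite{cur2} separable construction so that the coefficients vanish \emph{inside} a factor, while across factors a Bessel-type estimate for the martingale decomposition of $\chi_A$ over the independent factors makes the factorwise energies summable, hence vanishing in~$i$; the two mechanisms combine to give genuine finiteness of the $\epsilon$-sections and thus membership in $c_0(\kappa)$. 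Finally I would check countable additivity of $\tilde\nu$: since the densities $g_\gamma$ are uniformly integrable and $\tilde\nu(A)\in c_0(\kappa)$ for every~$A$, a Vitali--Hahn--Saks type argument upgrades finite additivity to countable additivity in the norm of $c_0(\kappa)$, completing the identification $L_1(\nu)=L_1(\tilde\nu)$ with equal norms.
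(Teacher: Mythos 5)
Your construction coincides, up to and including the $\ell_\infty^c(\kappa)$ stage, with the paper's own proof: Rybakov control measure, a norming family $\{h_\gamma\}_{\gamma<\kappa}$ of K\"othe-dual weights, the Maharam isomorphism with $(\{-1,1\}^\kappa,\Lambda_\kappa,\lambda_\kappa)$, modulation of each $h_\gamma$ by a coordinate function $r_{\phi(\gamma)}$ with $\phi$ injective and $\phi(\gamma)$ outside the countable set of coordinates on which $h_\gamma$ depends; your coordinatewise formula for $\|\cdot\|_{L_1(\tilde\nu)}$ is the paper's Lemma~\ref{lem:NormLINFTYvalued}. But at precisely the point you yourself single out as ``the main obstacle'' --- upgrading countable $\epsilon$-sections to finite ones --- your text stops being a proof. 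The proposed mechanism (decompose $\{-1,1\}^\kappa$ into separable independent factors, run a Curbera-type construction inside each factor, and use a Bessel estimate across factors) is not carried out, and as stated it is doubtful for two concrete reasons. First, the Bessel inequality controls the $L_2$-norms of the factor marginals of $\chi_A$, whereas the quantities you must control are the pairings $\int_A h_\gamma r_{\phi(\gamma)}\,d\lambda$ in which $h_\gamma$ is merely an $L_1(\lambda)$ function whose countable support is spread over many factors; no square-summability of these pairings follows from that estimate. Second, making the coefficients ``vanish inside a factor'' by a Curbera-type construction would require the weights to be measurable with respect to single factors, which is incompatible with $\{h_\gamma\}$ being a norming family for the original norm of $L_1(\nu)$ --- and that norming property is exactly what your isometry argument rests on.

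What is actually missing is far simpler and is already latent in your own construction. If $A\in\Sigma$ corresponds under the Maharam isomorphism to a \emph{clopen} set $C\subseteq\{-1,1\}^\kappa$, then $C$ depends on a finite set $I$ of coordinates; since $\phi(\gamma)$ avoids the support of $h_\gamma$, independence forces $\int_A h_\gamma r_{\phi(\gamma)}\,d\lambda=0$ unless $\phi(\gamma)\in I$, and injectivity of $\phi$ makes $\{\gamma:\phi(\gamma)\in I\}$ finite: so $\tilde\nu(A)$ is \emph{finitely} supported, with no extra machinery. For a general $A$, pick a clopen $C_\epsilon$ with $\lambda_\kappa(C\triangle C_\epsilon)$ small and a set $A_\epsilon\in\Sigma$ representing it; the uniform bound $\bigl|\int_B h_\gamma r_{\phi(\gamma)}\,d\lambda\bigr|\le\|\nu\|(B)$ (valid simultaneously for all $\gamma$ because each modulated weight still defines a functional in $B_{L_1(\nu)^*}$), together with $\|\nu\|(B)\to 0$ as $\lambda(B)\to 0$, gives $\|\tilde\nu(A)-\tilde\nu(A_\epsilon)\|_{\ell_\infty(\kappa)}\le\epsilon$. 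Hence $\tilde\nu(A)$ is an $\ell_\infty(\kappa)$-limit of finitely supported vectors and lies in the closed subspace $c_0(\kappa)$. The same uniform bound yields countable additivity of $\tilde\nu$ directly, so no Vitali--Hahn--Saks argument is needed. Finally, note that equality of the norms on simple functions does not by itself give $L_1(\nu)=L_1(\tilde\nu)$ as sets; to pass from the dense subspace ${\rm sim}(\Sigma)$ to the whole spaces you need something like Lemma~\ref{lem:Lipecki}, applied in both directions.
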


The proof of Theorem~\ref{theo:general} uses some ideas of \cite[Example~2.6]{rod16}. There we showed that, for an arbitrary
uncountable cardinal~$\kappa$ and $1<p<\infty$, the $L_p$ space of the usual product probability measure on the Cantor cube $\{-1,1\}^\kappa$ is equal to $L_1(\nu)$
for some $c_0(\kappa)$-valued vector measure~$\nu$. We stress that one cannot arrive at the same conclusion by using
a $c_0$-valued vector measure, see \cite[Example~4.16]{oka-rod-san}.

The paper is organized as follows. In Section~\ref{section:preliminaries}
we fix the terminology and include some preliminary facts on $L_1$ spaces of a vector measure.
In Section~\ref{section:proof} we prove Theorem~\ref{theo:general} and a similar result for non-homogeneous vector measures (Theorem~\ref{theo:omega1}). 
We finish the paper with further remarks on $c_0(\kappa)$-valued vector measures which might be of independent interest
(Theorem~\ref{theo:remarks}).

\section{Preliminaries}\label{section:preliminaries}

Our notation is standard as can be found in \cite{die-uhl-J} and~\cite{oka-alt}. We write $\mathbb N=\{1,2,\dots\}$. 
The {\em density character} of a topological space~$T$, denoted by~${\rm dens}(T)$,
is the minimal cardinality of a dense subset of~$T$. 

Given a non-empty set~$I$, we denote by $\Lambda_I$ 
the $\sigma$-algebra on~$\{-1,1\}^I$ generated by all the sets of the form 
$$
	\{x\in \{-1,1\}^I: \, x(i)=y(i) \text{ for all $i\in J$}\},
$$ 
where $J \sub I$ is finite and $y\in\{-1,1\}^J$. Every closed-and-open subset of~$\{-1,1\}^I$
is a finite union of sets as above. The symbol~$\lambda_I$ stands for the usual product probability measure
on~$(\{-1,1\}^I,\Lambda_I)$. For each $i\in I$ we denote by 
$$
	\pi_i^I:\{-1,1\}^I \to \{-1,1\}
$$ 
the $i$th-coordinate projection and, for each non-empty set $J \sub I$, we denote by 
$$
	\rho^I_{J}:\{-1,1\}^{I} \to \{-1,1\}^{J}
$$
the canonical projection.

All our Banach spaces are real.  
The closed unit ball of a Banach space~$X$ is denoted by~$B_X$ and the dual of~$X$ is denoted by~$X^*$. The symbol $\|\cdot\|_X$ stands for the norm of~$X$.
Given a non-empty set~$\Gamma$,
we denote by $c_0(\Gamma)$ the Banach space of all bounded functions $\varphi:\Gamma \to \mathbb R$
such that $\{\gamma\in \Gamma: |\varphi(\gamma)|>\epsilon\}$ is finite for every $\epsilon>0$, equipped with the supremum norm.

Let $(\Omega,\Sigma)$ be a measurable space. Given a Banach space~$X$, we denote by 
${\rm ca}(\Sigma,X)$ the set of all $X$-valued vector measures defined on~$\Sigma$. Unless stated otherwise, our {\em measures}
are meant to be countably additive. 

Let $\nu\in {\rm ca}(\Sigma,X)$. Given $A\in \Sigma$, we denote by $\nu_A$
the restriction of~$\nu$ to 
$$
	\Sigma_A:=\{B\in \Sigma: \, B \sub A\} 
$$
(which is a $\sigma$-algebra on~$A$). The set $A$ is called {\em $\nu$-null} if $\nu(B)=0$ for every $B\in \Sigma_A$ or, equivalently, 
$\|\nu\|(A)=0$, where $\|\nu\|$ is the semivariation of~$\nu$.
The family of all $\nu$-null sets is denoted by $\mathcal{N}(\nu)$. We say that $\nu$ is
{\em atomless} if for every $A\in \Sigma\setminus \mathcal{N}(\nu)$ there is $B\in \Sigma_A$ such that
neither $B$ nor $A\setminus B$ is $\nu$-null. 

By a {\em Rybakov control measure} of~$\nu$
we mean a finite non-negative measure of the form $\mu=|x^*\nu|$ for some $x^*\in X^*$ such that 
$\mathcal{N}(\mu) = \mathcal{N}(\nu)$ (see, e.g., \cite[p.~268, Theorem~2]{die-uhl-J});
here $|x^*\nu|$ is the variation of the signed measure $x^*\nu:\Sigma \to \mathbb R$ obtained as the composition of~$\nu$ with $x^*$. 

A $\Sigma$-measurable function $f:\Omega \to \mathbb{R}$ is said to be {\em $\nu$-integrable} if 
it is $|x^*\nu|$-integrable for all $x^*\in X^*$ and, for each $A\in \Sigma$, there is $\int_A f \, d\nu\in X$
such that
$$
	x^*\left ( \int_A f \, d\nu \right)=\int_A f\,d(x^*\nu)  	\quad\text{for all $x^*\in X^*$}.
$$
By identifying functions which coincide $\nu$-a.e., the set $L_1(\nu)$ of all (equivalence classes of) $\nu$-integrable functions is a Banach lattice with the $\nu$-a.e. order and the norm
$$
	\|f\|_{L_1(\nu)}:=\sup_{x^*\in B_{X^*}}\int_\Omega |f|\,d|x^*\nu|.
$$
We write ${\rm sim}(\Sigma)$ to denote the linear subspace of $L_1(\nu)$ consisting of all (equivalence classes of) {\em simple functions}, that is,
linear combinations of characteristic functions~$\chi_A$ where $A\in \Sigma$. The set ${\rm sim}(\Sigma)$ is norm dense in~$L_1(\nu)$. 
As in the case of finite non-negative measures, if $L_1(\nu)$ is infinite-dimensional, then its density character coincides with the 
minimal cardinality of a set $\mathcal{C} \sub \Sigma$ satisfying that $\inf_{C\in \mathcal{C}}\|\nu\|(A\triangle C)=0$ for all $A\in \Sigma$. 
We say that $\nu$ is {\em homogeneous} if it is atomless
and 
$$
	{\rm dens}(L_1(\nu))={\rm dens}(L_1(\nu_A))
	\quad\text{for every $A\in \Sigma\setminus \mathcal{N}(\nu)$}.
$$ 
In this case, 
the cardinal ${\rm dens}(L_1(\nu))$ is called the {\em Maharam type} of~$\nu$.
It is easy to check that: (i)~${\rm dens}(L_1(\nu))={\rm dens}(L_1(\mu))$ for any Rybakov control measure~$\mu$ of~$\nu$;
(ii)~$\nu$ is atomless (resp., homogeneous) if and only if some/any Rybakov control measure of~$\nu$ is atomless (resp., homogeneous).

As a Banach lattice, $L_1(\nu)$ has order continuous norm
and a weak unit (the function $\chi_\Omega$). If $\mu$ is a Rybakov control measure of~$\nu$, then $L_1(\nu)$ is a 
{\em K\"{o}the function space} over $(\Omega,\Sigma,\mu)$ and we can consider its {\em K\"{o}the dual}
$$
	L_1(\nu)':=\{g\in L_1(\mu): \, fg \in L_1(\mu) \text{ for all $f\in L_1(\nu)$}\}.
$$
For each $g \in L_1(\nu)'$ we have a functional $\varphi_g\in L_1(\nu)^*$ defined by 
$$
	\varphi_g(f):=\int_\Omega fg \, d\mu 
	\quad\text{for all $f\in L_1(\nu)$}. 
$$
Since $L_1(\nu)$ has order continuous norm, the equality 
\begin{equation}\label{eqn:Kothe}
	L_1(\nu)^*=\{\varphi_g: \, g\in L_1(\nu)'\} 
\end{equation}
holds
(see, e.g., \cite[p.~29]{lin-tza-2}).

We will also need the following two auxiliary results, which can be found in \cite[Lemma~2.3]{rod16} and 
\cite[Lemma~3.6]{nyg-rod}, respectively.

\begin{lem}\label{lem:NormLINFTYvalued}
Let $\Gamma$ be a non-empty set and let $Z$ be a closed subspace of~$\ell_\infty(\Gamma)$. For each $\gamma\in \Gamma$, denote by 
$e_\gamma^*\in B_{\ell_\infty(\Gamma)^*}$ the $\gamma$-th coordinate projection. 
Let $(\Omega,\Sigma)$ be a measurable space and let $\nu\in ca(\Sigma,Z)$. Then
$$
	\|f\|_{L_1(\nu)}=\sup_{\gamma\in \Gamma}\int_\Omega|f| \, d|e_\gamma^*\nu|
	\quad\text{for every $f\in L_1(\nu)$}.
$$
\end{lem}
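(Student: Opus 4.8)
The plan is to reduce the identity to a computation of semivariation for $\ell_\infty(\Gamma)$-valued measures, where a supremum-swapping argument is transparent. The inequality ``$\ge$'' is immediate: each restriction $e_\gamma^*|_Z$ belongs to $B_{Z^*}$ (since $Z \sub \ell_\infty(\Gamma)$ and $\|e_\gamma^*\|\le 1$), so $\sup_{\gamma}\int_\Omega |f|\,d|e_\gamma^*\nu| \le \sup_{x^*\in B_{Z^*}}\int_\Omega |f|\,d|x^*\nu|=\|f\|_{L_1(\nu)}$. The work lies in the reverse inequality, and for that I would pass to the indefinite integral.

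Fix $f\in L_1(\nu)$ and consider the vector measure $\nu_f:\Sigma\to Z$ given by $\nu_f(A):=\int_A f\,d\nu$. For every $x^*\in Z^*$ one has $x^*\nu_f=f\cdot(x^*\nu)$, the scalar measure with density $f$ with respect to $x^*\nu$, whence $|x^*\nu_f|(\Omega)=\int_\Omega |f|\,d|x^*\nu|$. Taking the supremum over $x^*\in B_{Z^*}$ yields $\|f\|_{L_1(\nu)}=\|\nu_f\|(\Omega)$, the semivariation of~$\nu_f$; applying the same density identity to the coordinate functionals gives $|e_\gamma^*\nu_f|(\Omega)=\int_\Omega|f|\,d|e_\gamma^*\nu|$. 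Thus the claimed equality becomes $\|\nu_f\|(\Omega)=\sup_{\gamma\in\Gamma}|e_\gamma^*\nu_f|(\Omega)$, that is, it suffices to establish the characteristic-function case of the lemma for the $Z$-valued measure~$\nu_f$.

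To finish I would invoke the partition description of semivariation: for any $\mu\in {\rm ca}(\Sigma,Z)$,
$$
\|\mu\|(\Omega)=\sup\Bigl\{\bigl\|\textstyle\sum_i a_i\mu(A_i)\bigr\|_{\ell_\infty(\Gamma)}\Bigr\},
$$
the supremum being over all finite $\Sigma$-partitions $\{A_i\}$ of~$\Omega$ and all scalars $|a_i|\le 1$. Since $\|\sum_i a_i\mu(A_i)\|_{\ell_\infty(\Gamma)}=\sup_{\gamma}|\sum_i a_i(e_\gamma^*\mu)(A_i)|$ and suprema may be freely interchanged, this equals $\sup_{\gamma}\sup_{\{A_i\},\,|a_i|\le1}|\sum_i a_i(e_\gamma^*\mu)(A_i)|=\sup_{\gamma}|e_\gamma^*\mu|(\Omega)$, the last step being the elementary fact that the variation of a scalar measure is attained by choosing $a_i=\mathrm{sign}\bigl((e_\gamma^*\mu)(A_i)\bigr)$. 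Applying this with $\mu=\nu_f$ closes the argument.

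The conceptual crux is the reduction through the indefinite integral, namely the identity $\|f\|_{L_1(\nu)}=\|\nu_f\|(\Omega)$ together with the fact that $\nu_f$ still takes values in~$Z$; once this is in place, everything collapses to the characteristic-function situation and the one-line interchange of suprema, which works precisely because the norm of $\ell_\infty(\Gamma)$ is itself a supremum over coordinates. I expect the remaining points to be routine bookkeeping: verifying that $\nu_f$ is a well-defined countably additive $Z$-valued measure and that $x^*\nu_f$ has density $f$ with respect to $x^*\nu$, both of which are immediate from the definition of $\nu$-integrability recalled in Section~\ref{section:preliminaries}.
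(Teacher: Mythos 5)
Your proof is correct. One thing to be aware of: the paper does not actually prove this lemma at all --- it is imported with a citation from \cite[Lemma~2.3]{rod16} --- so there is no in-paper argument to compare against; your write-up is, in effect, a reconstruction of the cited result. Your route is the natural one for this statement: the inequality $\geq$ is trivial since $e_\gamma^*|_Z\in B_{Z^*}$; for the converse you pass to the indefinite integral $\nu_f(A)=\int_A f\,d\nu$, which takes values in $Z$ by the very definition of $\nu$-integrability, identify $\|f\|_{L_1(\nu)}$ with the semivariation $\|\nu_f\|(\Omega)$ via the standard fact that the scalar measure $x^*\nu_f$ has variation $\int_{(\cdot)}|f|\,d|x^*\nu|$, and then compute the semivariation of an $\ell_\infty(\Gamma)$-valued measure coordinatewise using the partition description of semivariation (see, e.g., \cite{die-uhl-J}) together with an interchange of suprema. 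All of these steps are sound, and the facts you defer to as routine (well-definedness of $\nu_f$ and the density identity for $x^*\nu_f$) are indeed immediate from the definition of $\nu$-integrability recalled in Section~\ref{section:preliminaries}; note that for your argument even finite additivity of $\nu_f$ suffices, since the partition formula never uses countable additivity. One detail worth writing out explicitly: the semivariation of $\nu_f$ as a $Z$-valued measure (i.e., computed over $B_{Z^*}$, which is what matches $\|f\|_{L_1(\nu)}$) coincides with its semivariation as an $\ell_\infty(\Gamma)$-valued measure. This follows either by Hahn--Banach extension of functionals, or, more directly, because the partition formula involves only norms of the vectors $\sum_i a_i\nu_f(A_i)\in Z$, on which the $Z$-norm and the $\ell_\infty(\Gamma)$-norm agree; with that remark inserted, the supremum-swapping step closes the proof exactly as you describe.
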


\begin{lem}\label{lem:Lipecki}
Let $X$ and $Y$ be Banach spaces, let $(\Omega,\Sigma)$ be a measurable space and let $\nu\in {\rm ca}(\Sigma,X)$ and $\tilde{\nu}\in {\rm ca}(\Sigma,Y)$
such that $\mathcal{N}(\nu)=\mathcal{N}(\tilde{\nu})$. Suppose that there is a constant $c>0$ such that 
$\|f\|_{L_1(\nu)} \leq c \|f\|_{L_1(\tilde{\nu})}$ for every $f\in {\rm sim}(\Sigma)$.
Then $L_1(\tilde{\nu})$ embeds continuously into $L_1(\nu)$ with norm $\leq c$.
\end{lem}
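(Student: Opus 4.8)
The plan is to extend the simple-function inequality by density and then verify that the resulting bounded operator is nothing but the concrete inclusion of function classes. Since $\mathcal{N}(\nu)=\mathcal{N}(\tilde{\nu})$, the relations ``$\nu$-a.e.'' and ``$\tilde{\nu}$-a.e.'' coincide, so $L_1(\nu)$ and $L_1(\tilde{\nu})$ consist of equivalence classes of $\Sigma$-measurable functions modulo the \emph{same} equivalence relation; it therefore makes sense to ask whether one space is contained in the other. Concretely, I would prove that every $f\in L_1(\tilde{\nu})$ belongs to $L_1(\nu)$ and satisfies $\|f\|_{L_1(\nu)}\le c\,\|f\|_{L_1(\tilde{\nu})}$, which is exactly the asserted continuous embedding of norm $\le c$.

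First I would fix Rybakov control measures $\mu=|x_0^*\nu|$ of~$\nu$ and $\tilde{\mu}=|y_0^*\tilde{\nu}|$ of~$\tilde{\nu}$, so that $\mathcal{N}(\mu)=\mathcal{N}(\nu)=\mathcal{N}(\tilde{\nu})=\mathcal{N}(\tilde{\mu})$; in particular $\mu$ and $\tilde{\mu}$ are mutually absolutely continuous finite measures. The two elementary domination inequalities
$$
	\int_\Omega |h|\,d\mu\le \|h\|_{L_1(\nu)} \quad\text{and}\quad \int_\Omega |h|\,d\tilde{\mu}\le \|h\|_{L_1(\tilde{\nu})} \qquad (h\in {\rm sim}(\Sigma)),
$$
which follow at once from the defining formula for the $L_1(\cdot)$-norm, are what let me transfer norm-convergence into convergence in the control measures.

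Next, given $f\in L_1(\tilde{\nu})$, I would use that ${\rm sim}(\Sigma)$ is dense in $L_1(\tilde{\nu})$ to pick simple functions $s_n\to f$ in $L_1(\tilde{\nu})$. The hypothesis gives $\|s_n-s_m\|_{L_1(\nu)}\le c\,\|s_n-s_m\|_{L_1(\tilde{\nu})}$, so $(s_n)$ is Cauchy in $L_1(\nu)$ and converges there to some $g\in L_1(\nu)$ with $\|g\|_{L_1(\nu)}=\lim_n\|s_n\|_{L_1(\nu)}\le c\,\|f\|_{L_1(\tilde{\nu})}$. It then remains to identify $g$ with $f$ as classes. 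This is the crux, and the only place where $\mathcal{N}(\nu)=\mathcal{N}(\tilde{\nu})$ is genuinely used: by the domination inequalities, $s_n\to f$ in $L_1(\tilde{\mu})$ and $s_n\to g$ in $L_1(\mu)$, hence $s_n\to f$ in $\tilde{\mu}$-measure and $s_n\to g$ in $\mu$-measure; passing to a single subsequence along which both convergences hold almost everywhere (with respect to $\tilde{\mu}$ and $\mu$ respectively) and recalling that $\mathcal{N}(\mu)=\mathcal{N}(\tilde{\mu})$ forces $f=g$ outside a common null set, i.e.\ $\nu$-a.e. Consequently $f=g$ in $L_1(\nu)$, so $f\in L_1(\nu)$ with $\|f\|_{L_1(\nu)}\le c\,\|f\|_{L_1(\tilde{\nu})}$. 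As $f$ was arbitrary, the inclusion $L_1(\tilde{\nu})\hookrightarrow L_1(\nu)$ is a well-defined bounded linear map of norm $\le c$.

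I expect the extension by density and the norm estimate to be routine; the genuine obstacle is the identification $g=f$, that is, checking that the abstract completion of the inclusion defined on simple functions coincides with the concrete inclusion of function classes. Without the equal-null-sets hypothesis this identification would fail, since $\mu$-a.e.\ and $\tilde{\mu}$-a.e.\ equality could differ, and the subsequence argument above is precisely what bridges the two notions of ``almost everywhere''.
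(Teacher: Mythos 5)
Your proof is correct and follows essentially the standard route: the paper itself states this lemma without proof, citing \cite[Lemma~3.6]{nyg-rod}, and your scheme---extend the inequality from ${\rm sim}(\Sigma)$ by density and completeness of $L_1(\nu)$, then identify the abstract limit $g$ with $f$ by passing through Rybakov control measures, convergence in measure, and an a.e.-convergent common subsequence, using $\mathcal{N}(\mu)=\mathcal{N}(\tilde{\mu})$ to merge the two notions of almost everywhere---is precisely the argument behind that reference. The only cosmetic point is that you state the domination inequalities $\int_\Omega |h|\,d\mu\le \|h\|_{L_1(\nu)}$ and $\int_\Omega |h|\,d\tilde{\mu}\le \|h\|_{L_1(\tilde{\nu})}$ for $h\in{\rm sim}(\Sigma)$ but then apply them to the non-simple functions $s_n-f$ and $s_n-g$; this is harmless, since they hold for every integrable function directly from the definition of the $L_1$-norm (after normalizing the Rybakov functionals $x_0^*$, $y_0^*$ to lie in the dual unit ball, which changes $\mu$, $\tilde{\mu}$ only by positive constants).
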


\section{Results}\label{section:proof}

Given a probability space $(\Omega,\Sigma,\mu)$, we consider the equivalence relation on~$\Sigma$ defined by $A \sim B$ if and only if $\mu(A\triangle B)=0$.
The set $\Sigma/\mathcal{N(\mu)}$ of equivalence classes becomes a {\em measure algebra}
with the usual Boolean algebra operations and the functional defined by $\mu^{\bullet}(A^{\bullet}):=\mu(A)$ for all $A\in \Sigma$, where $A^{\bullet}\in \Sigma/\mathcal{N}(\mu)$ is the equivalence class of~$A$. We refer to~\cite{fre14} for more information on measure algebras.

We now proceed with the proof of our main result.

\begin{proof}[Proof of Theorem~\ref{theo:general}]
We divide the proof into several steps.

{\sc Step~1.}
Let $(\Omega,\Sigma)$ be the underlying measurable space and
let $\mu$ be a Rybakov control measure of~$\nu$. Suppose without loss of generality that $\mu(\Omega)=1$. 
Since $\mu$ is homogeneous and has Maharam type~$\kappa$, Maharam's theorem (see, e.g., \cite[Section~3]{fre14} or \cite[\S14]{lac-J}) ensures
that the measure algebras of~$\mu$ and~$\lambda_{\kappa}$ are 
isomorphic, that is, there is a Boolean algebra isomorphism
$$
	\theta: \Sigma/\mathcal{N}(\mu) \to \Lambda_{\kappa}/\mathcal{N}(\lambda_{\kappa})
$$
such that $\lambda_{\kappa}^{\bullet}\circ \theta=\mu^{\bullet}$. This isomorphism induces a lattice isometry 
$$
	\Phi:L_1(\mu)\to L_1(\lambda_{\kappa})
$$
such that for every $f\in L_1(\mu)$ we have
\begin{equation}\label{eqn:i}
	\int_\Omega f\, d\mu=\int_{\{-1,1\}^\kappa}\Phi(f)\, d\lambda_{\kappa}
\end{equation}
and
\begin{equation}\label{eqn:ii}
 	\Phi(f\chi_A)=\Phi(f)\chi_{C} \quad\text{whenever $A\in \Sigma$ and $C\in \Lambda_{\kappa}$ satisfy $\theta(A^{\bullet})=C^{\bullet}$.}
\end{equation}

{\sc Step~2.} It is well known that for an arbitrary Banach space~$Y$ the inequalities
$$
	{\rm dens}(Y^*,\text{weak}^*) \leq
	{\rm dens}(B_{Y^*},\text{weak}^*) \leq
	{\rm dens}(Y)
$$
hold. If, in addition, $Y$ is weakly compactly generated, then they turn out to be equalities
(see, e.g., \cite[Theorem~13.3]{fab-ultimo}). Therefore, since $L_1(\nu)$ is weakly compactly generated, \cite[Theorem~2]{cur1} (cf. \cite[p.~193]{buk-alt}), we have 
$$
	{\rm dens}(B_{L_1(\nu)^*},\text{weak}^*)={\rm dens}(L_1(\nu))=\kappa.
$$
Let $H \sub B_{L_1(\nu)^*}$ be a weak$^*$-dense subset of~$B_{L_1(\nu)^*}$
with cardinality~$\kappa$. Let us write $H=\{\varphi_{h_\alpha}:\alpha<\kappa\}$ where $h_\alpha\in L_1(\nu)'$ for all $\alpha<\kappa$
(see equality~\eqref{eqn:Kothe} at page~\pageref{eqn:Kothe}).
Then
$$
	\|f\|_{L_1(\nu)}=\sup_{\alpha<\kappa}\varphi_{h_\alpha}(f)
	\quad\text{for all $f\in L_1(\nu)$}.
$$
Since $|h_\alpha|\in L_1(\nu)'$ and $\varphi_{|h_\alpha|} \in B_{L_1(\nu)^*}$ for all $\alpha<\kappa$, the previous equality yields
\begin{equation}\label{eqn:norming0}
		\|f\|_{L_1(\nu)}=\sup_{\alpha<\kappa}\varphi_{|h_\alpha|}(|f|)
	\quad\text{for all $f\in L_1(\nu)$}.
\end{equation}

Fix $\alpha<\kappa$. Then $h_\alpha\in L_1(\mu)$ and so we can consider $\Phi(h_\alpha)\in L_1(\lambda_\kappa)$.
Hence, there exist a countable set $I_\alpha \sub \kappa$ and $\tilde{h}_\alpha\in L_1(\lambda_{I_\alpha})$
such that 
\begin{equation}\label{eqref:countablymany}
	\Phi(h_\alpha)=\tilde{h}_\alpha \circ \rho^\kappa_{I_\alpha}
\end{equation}
(see, e.g., \cite[254Q]{freMT-2}). 

Let $\psi: \kappa \to \kappa$ be an injective map such that 
$\psi(\alpha)\not\in I_\alpha$ for all $\alpha<\kappa$. Note that such a map can be constructed by transfinite induction. Indeed, take
$\alpha<\kappa$ and suppose that $\psi(\beta)$ is already defined for all $\beta<\alpha$. Then 
$\{\psi(\beta):\beta<\alpha\} \cup I_\alpha$ has cardinality strictly less
than~$\kappa$ (bear in mind that $\kappa$ is uncountable and $I_\alpha$ is countable). Hence, we can pick 
$\psi(\alpha)\in \kappa \setminus \{\psi(\beta):\beta<\alpha\} \cup I_\alpha$.

{\sc Step~3.} Fix $\alpha<\kappa$. We write
$$
	\pi^\kappa_{\psi(\alpha)}=\chi_{C_{\psi(\alpha)}}-\chi_{\{-1,1\}^\kappa \setminus C_{\psi(\alpha)}},
$$ 
where $C_{\psi(\alpha)}:=(\pi^\kappa_{\psi(\alpha)})^{-1}(\{1\}) \in \Lambda_\kappa$. Then
$$
	\Phi^{-1}(\pi^\kappa_{\psi(\alpha)})=\chi_{A_{\psi(\alpha)}}-\chi_{\Omega\setminus A_{\psi(\alpha)}},
$$ 
where $A_{\psi(\alpha)}$ is some element of~$\Sigma$ with $\theta(A_{\psi(\alpha)}^{\bullet})=C_{\psi(\alpha)}^{\bullet}$.
Since $|\Phi^{-1}(\pi^\kappa_{\psi(\alpha)})|= \chi_\Omega$ and $\varphi_{h_\alpha} \in B_{L_1(\nu)^*}$, we have 
$$
	g_\alpha:= h_\alpha \Phi^{-1}(\pi^\kappa_{\psi(\alpha)})\in L_1(\nu)'
$$
with $\varphi_{g_\alpha}\in B_{L_1(\nu)^*}$ and so 
$$
	\Big|\int_A g_\alpha \, d\mu\Big| = |\varphi_{g_\alpha}(\chi_A)| \leq
	\|\chi_A\|_{L_1(\nu)}\|\varphi_{g_\alpha}\|_{L_1(\nu)^*}\leq
	\|\chi_A\|_{L_1(\nu)}=\|\nu\|(A)
$$
for every $A\in \Sigma$. Hence, we have
$$
	\tilde{\nu}(A):=\left(\int_A g_\alpha \, d\mu\right)_{\alpha<\kappa}\in \ell_\infty(\kappa)
$$ 
and 
\begin{equation}\label{eqn:ca}
	\|\tilde{\nu}(A)\|_{\ell_\infty(\kappa)}\leq \|\nu\|(A)
\end{equation}
for every $A\in \Sigma$. Clearly, $\tilde{\nu}: \Sigma \to \ell_\infty(\kappa)$ is finitely additive. Since 
$\|\nu\|(A)\to 0$ as $\mu(A)\to 0$ (see, e.g., \cite[p.~10, Theorem~1]{die-uhl-J}), inequality~\eqref{eqn:ca} ensures that
$\tilde{\nu}$ is countably additive, that is, $\tilde{\nu}\in{\rm ca}(\Sigma,\ell_\infty(\kappa))$. 

{\sc Step~4.} Let $C \sub \{-1,1\}^\kappa$ be an arbitrary closed-and-open set (in particular, $C\in \Lambda_\kappa$) and let $A\in \Sigma$ such that $\theta(A^{\bullet})=C^{\bullet}$.
We claim that for every sequence $(\alpha_n)_{n\in \N}$ of pairwise distinct elements
of~$\kappa$ we have $\int_A g_{\alpha_n} \, d\mu = 0$ for $n$ large enough. 

Indeed, since $C$ is closed-and-open, there exist a finite set $I \sub \kappa$ and a set $B \sub \{-1,1\}^I$ such that
$C=B\times \{-1,1\}^{\kappa\setminus I}$. 
For each $n\in \N$ we define $J_n:=I\cup I_{\alpha_n}$ and, bearing in mind~\eqref{eqref:countablymany}, we write 
\begin{equation}\label{eqn:countablymany2}
	\Phi(h_{\alpha_n})\chi_C=\hat{h}_{\alpha_n} \circ \rho^\kappa_{J_n}
\end{equation}
for some $\hat{h}_{\alpha_n} \in L_1(\lambda_{J_n})$. Since $I$ is finite,
$\psi$ is injective and the $\alpha_n$'s are pairwise distinct,
there is $n_0\in \N$ such that for every $n\geq n_0$ we have $\psi(\alpha_n)\not\in J_n$, thus 
\begin{equation}\label{eqn:independence}
	\int_{\{-1,1\}^{\kappa \setminus J_n}} \pi^{\kappa\setminus J_n}_{\psi(\alpha_n)} \, 
	d\lambda_{\kappa\setminus J_n}=0
\end{equation}
and so
\begin{eqnarray*}
	\int_A g_{\alpha_n} \, d\mu & =&
	\int_\Omega h_{\alpha_n}(\chi_{A_{\psi(\alpha_n)}\cap A}-\chi_{A\setminus A_{\psi(\alpha_n)}}) \, d\mu
	\\ &\stackrel{\text{\eqref{eqn:i} \& \eqref{eqn:ii}}}{=}&
	\int_{\{-1,1\}^\kappa} \Phi(h_{\alpha_n})(\chi_{C_{\psi(\alpha_n)}\cap C}-\chi_{C\setminus C_{\psi(\alpha_n)}}) \, d\lambda_\kappa \\ &=&
	\int_{\{-1,1\}^\kappa} \Phi(h_{\alpha_n})\chi_{C} \, \pi^\kappa_{\psi(\alpha_n)} \, d\lambda_\kappa \\ & \stackrel{\eqref{eqn:countablymany2}}{=}&
	\int_{\{-1,1\}^\kappa} \big(\hat{h}_{\alpha_n} \circ \rho^\kappa_{J_n}\big) \, 
	\big(\pi^{\kappa\setminus J_n}_{\psi(\alpha_n)}\circ \rho^\kappa_{\kappa\setminus J_n}\big) \, d\lambda_\kappa \\ &\stackrel{\text{(*)}}{=}&
	\left(\int_{\{-1,1\}^{J_n}} \hat{h}_{\alpha_n} \, d\lambda_{J_n}\right) \left(\int_{\{-1,1\}^{\kappa \setminus J_n}} \pi^{\kappa\setminus J_n}_{\psi(\alpha_n)} \, 
	d\lambda_{\kappa\setminus J_n}\right)\stackrel{\eqref{eqn:independence}}{=}0,
\end{eqnarray*}
where equality (*) follows from Fubini's theorem.

{\sc Step~5.} We claim that 
$$
	\tilde{\nu}(A)\in c_0(\kappa) \quad\text{for every $A\in \Sigma$}
$$
and so $\tilde{\nu}\in {\rm ca}(\Sigma,c_0(\kappa))$.

Indeed, fix $A\in \Sigma$ and $\epsilon>0$. 
Choose $\delta>0$ such that 
$$
	\|\tilde{\nu}(B)\|_{\ell_\infty(\kappa)}\leq \frac{\epsilon}{2}
	\quad \text{for every $B\in \Sigma$ with $\mu(B)\leq \delta$}
$$
(see {\sc Step~3}). Take $C\in \Lambda_\kappa$ such that $\theta(A^{\bullet})=C^{\bullet}$. 
There is a closed-and-open set $C_\epsilon \sub \{-1,1\}^\kappa$ such that $\lambda_\kappa(C\triangle C_\epsilon)\leq \delta$. 
Take $A_\epsilon\in \Sigma$ such that $\theta(A_\epsilon^{\bullet})=C_\epsilon^{\bullet}$. 
By {\sc Step~4}, we have $\tilde{\nu}(A_\epsilon)\in c_0(\kappa)$. Since $\mu(A\triangle A_\epsilon)=\lambda_\kappa(C\triangle C_\epsilon)\leq \delta$, we have
\begin{eqnarray*}
	\|\tilde{\nu}(A)-\tilde{\nu}(A_\epsilon)\|_{\ell_\infty(\kappa)}
	&=&\|\tilde{\nu}(A\setminus A_\epsilon)-\tilde{\nu}(A_\epsilon\setminus A)\|_{\ell_\infty(\kappa)}\\ &\leq& 
	\|\tilde{\nu}(A\setminus A_\epsilon)\|_{\ell_\infty(\kappa)}+\|\tilde{\nu}(A_\epsilon\setminus A)\|_{\ell_\infty(\kappa)}
	\leq \epsilon. 
\end{eqnarray*}
As $\epsilon>0$ is arbitrary, $\tilde{\nu}(A_\epsilon)\in c_0(\kappa)$ and $c_0(\kappa)$ is a closed subspace of~$\ell_\infty(\kappa)$, 
it follows that $\tilde{\nu}(A) \in c_0(\kappa)$. This proves the claim.
  
{\sc Step~6.} Fix $f\in {\rm sim}(\Sigma)$. By Lemma~\ref{lem:NormLINFTYvalued} and the very definition of~$\tilde{\nu}$, we have 
$$
	\|f\|_{L_1(\tilde{\nu})}=\sup_{\alpha<\kappa}\int_{\Omega} |f g_\alpha| \, d\mu  =
	\sup_{\alpha<\kappa}\int_{\Omega} |f h_\alpha| \, d\mu=
	\sup_{\alpha<\kappa}\varphi_{|h_\alpha|}(|f|)\stackrel{\eqref{eqn:norming0}}{=}\|f\|_{L_1(\nu)}.
$$
In particular, $\mathcal{N}(\nu)=\mathcal{N}(\tilde{\nu})$ and we can apply 
Lemma~\ref{lem:Lipecki} twice to infer that $L_1(\nu)=L_1(\tilde{\nu})$ with equal norms. The proof is finished.
\end{proof}

The following lemma will be useful when dealing with non-homogeneous vector measures. Let us recall first a standard renorming
for the $L_1$ space of a vector measure. Let $X$ be a Banach space, let $(\Omega,\Sigma)$ be a measurable space
and let $\nu\in {\rm ca}(\Sigma,X)$. Then the formula
$$
	|||f|||_{L_1(\nu)}:=\sup_{A\in \Sigma}\left\|\int_A f \, d\nu\right\|_X, \quad
	f\in L_1(\nu),
$$
defines an equivalent norm on~$L_1(\nu)$ and, in fact, one has
\begin{equation}\label{eqn:norms}
	|||f|||_{L_1(\nu)}\leq \|f\|_{L_1(\nu)} \leq 2|||f|||_{L_1(\nu)}
	\quad
	\text{for all $f\in L_1(\nu)$}
\end{equation}
(see, e.g., \cite[p.~112]{oka-alt}).

\begin{lem}\label{lem:glue}
Let $X$, $X_1$ and $X_2$ be Banach spaces, let $(\Omega,\Sigma)$ be a measurable space and let $\nu\in {\rm ca}(\Sigma,X)$. 
Let $A_1,A_2 \in \Sigma$ be disjoint with $\Omega=A_1\cup A_2$ and let $\nu_i\in {\rm ca}(\Sigma_{A_i},X_i)$ such that
$L_1(\nu_{A_i})=L_1(\nu_i)$ with equivalent norms for $i\in \{1,2\}$. Define $\tilde{\nu}:\Sigma \to X_1\oplus_\infty X_2$ by
$$
	\tilde{\nu}(A):=(\nu_1(A\cap A_1),\nu_2(A\cap A_2))
	\quad\text{for all $A\in \Sigma$}.
$$ 
Then $\tilde{\nu}\in {\rm ca}(\Sigma,X_1\oplus_\infty X_2)$ and $L_1(\nu)=L_1(\tilde{\nu})$ with equivalent norms.
\end{lem}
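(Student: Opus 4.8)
The plan is to verify countable additivity of $\tilde\nu$ directly, then to prove a two-sided norm comparison between $\|\cdot\|_{L_1(\nu)}$ and $\|\cdot\|_{L_1(\tilde\nu)}$ on simple functions, and finally to invoke Lemma~\ref{lem:Lipecki}. The main device is the equivalent norm $|||\cdot|||$ from \eqref{eqn:norms}: it decomposes cleanly over both the partition $\Omega=A_1\cup A_2$ and the $\oplus_\infty$ sum, whereas the genuine $L_1$-norm would involve the $\ell_1$-type dual of $X_1\oplus_\infty X_2$ and force one to handle convex combinations of functionals.

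Countable additivity is routine: each coordinate $A\mapsto \nu_i(A\cap A_i)$ is countably additive because $\nu_i$ is and $A\mapsto A\cap A_i$ sends disjoint unions to disjoint unions; since the $\oplus_\infty$-topology is the product topology, $\tilde\nu\in{\rm ca}(\Sigma,X_1\oplus_\infty X_2)$.

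For the norms, fix $f\in{\rm sim}(\Sigma)$. For each $A\in\Sigma$ one has $\int_A f\,d\tilde\nu=\big(\int_{A\cap A_1} f\,d\nu_1,\int_{A\cap A_2} f\,d\nu_2\big)$, so $\|\int_A f\,d\tilde\nu\|_{X_1\oplus_\infty X_2}$ is the maximum of the two coordinate norms. Since $A_1,A_2$ are disjoint and cover $\Omega$, the pair $(A\cap A_1,A\cap A_2)$ runs over all of $\Sigma_{A_1}\times\Sigma_{A_2}$ as $A$ runs over $\Sigma$; hence taking the supremum yields the exact identity $|||f|||_{L_1(\tilde\nu)}=\max\{|||f|_{A_1}|||_{L_1(\nu_1)},|||f|_{A_2}|||_{L_1(\nu_2)}\}$, where $f|_{A_i}\in{\rm sim}(\Sigma_{A_i})$. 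On the other hand, from $\int_A f\,d\nu=\int_{A\cap A_1} f\,d\nu_{A_1}+\int_{A\cap A_2} f\,d\nu_{A_2}$, the triangle inequality on top and the choices $A\subseteq A_i$ at the bottom give
\[
\max\{|||f|_{A_1}|||_{L_1(\nu_{A_1})},|||f|_{A_2}|||_{L_1(\nu_{A_2})}\}\le |||f|||_{L_1(\nu)}\le |||f|_{A_1}|||_{L_1(\nu_{A_1})}+|||f|_{A_2}|||_{L_1(\nu_{A_2})}.
\]
The hypothesis $L_1(\nu_{A_i})=L_1(\nu_i)$ with equivalent norms, together with \eqref{eqn:norms}, compares $|||\cdot|||_{L_1(\nu_{A_i})}$ with $|||\cdot|||_{L_1(\nu_i)}$ on ${\rm sim}(\Sigma_{A_i})$; chaining these relations produces constants $c_1,c_2>0$ with $c_1|||f|||_{L_1(\tilde\nu)}\le|||f|||_{L_1(\nu)}\le c_2|||f|||_{L_1(\tilde\nu)}$, and one more application of \eqref{eqn:norms} transfers this to the norms $\|\cdot\|_{L_1(\nu)}$ and $\|\cdot\|_{L_1(\tilde\nu)}$ on ${\rm sim}(\Sigma)$.

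Evaluating the resulting two-sided estimate at characteristic functions shows that $\|\nu\|(A)=0$ if and only if $\|\tilde\nu\|(A)=0$, that is, $\mathcal{N}(\nu)=\mathcal{N}(\tilde\nu)$; Lemma~\ref{lem:Lipecki}, applied once in each direction, then gives $L_1(\nu)=L_1(\tilde\nu)$ with equivalent norms. The only delicate point is the asymmetry in the central display: over the target $X_1\oplus_\infty X_2$ the two pieces combine as an exact maximum with no constant, while over $X$ they are added inside a single norm, costing a factor at most $2$. Tracking this loss, together with the factor $2$ from \eqref{eqn:norms}, is precisely what forces the conclusion to read \emph{equivalent} rather than \emph{equal} norms, and is the only genuine bookkeeping in the proof.
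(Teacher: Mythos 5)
Your proof is correct and follows essentially the same route as the paper: the coordinatewise identity $\int_A f\,d\tilde\nu=\big(\int_{A\cap A_1}f\,d\nu_1,\int_{A\cap A_2}f\,d\nu_2\big)$, the renorming $|||\cdot|||$ from \eqref{eqn:norms}, two-sided estimates on simple functions, and Lemma~\ref{lem:Lipecki} applied in both directions. Your exact identity $|||f|||_{L_1(\tilde\nu)}=\max\{|||f|_{A_1}|||_{L_1(\nu_1)},|||f|_{A_2}|||_{L_1(\nu_2)}\}$ and your explicit check of $\mathcal{N}(\nu)=\mathcal{N}(\tilde\nu)$ on characteristic functions are minor streamlinings of the paper's chains of inequalities, not a different argument.
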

\begin{proof}
Write $Z:=X_1\oplus_\infty X_2$. Clearly, $\tilde{\nu}\in {\rm ca}(\Sigma,Z)$. Let $c$ and $d$ be positive constants
such that
\begin{equation}\label{eqn:eq1}
	c^{-1} \|f|_{A_1}\|_{L_1(\nu_1)} \leq \|f|_{A_1}\|_{L_1(\nu_{A_1})} 
	\leq c \|f|_{A_1}\|_{L_1(\nu_1)}
\end{equation}
and
\begin{equation}\label{eqn:eq2}
	d^{-1} \|f|_{A_2}\|_{L_1(\nu_2)} \leq \|f|_{A_2}\|_{L_1(\nu_{A_2})} 
	\leq d \|f|_{A_2}\|_{L_1(\nu_2)}
\end{equation}
for every $f\in {\rm sim}(\Sigma)$. 

On the one hand, we have
\begin{equation}\label{eqn:eq3}
	\|f\|_{L_1(\nu)} \leq 2(c+d)\|f\|_{L_1(\tilde{\nu})}
\end{equation}
for every $f\in {\rm sim}(\Sigma)$. Indeed, note that 
\begin{equation}\label{eqn:integral}
	\int_A f \, d\tilde{\nu}=
	\left(\int_{A\cap A_1}f|_{A_1} \, d\nu_1, \int_{A\cap A_2}f|_{A_2} \, d\nu_2
	\right) \quad
	\text{for all $A\in \Sigma$}
\end{equation}
and so for each $i\in \{1,2\}$ we have
\begin{equation}\label{eqn:int}
	\|f|_{A_i}\|_{L_1(\nu_{i})} \stackrel{\eqref{eqn:norms}}{\leq} 2\sup_{A\in \Sigma}\left\|\int_{A\cap A_i}f|_{A_i} \, d\nu_i\right\|_{X_i}
	\stackrel{\eqref{eqn:integral}}{\leq} 2\sup_{A\in \Sigma}\left\|\int_{A}f \, d\tilde{\nu}\right\|_{Z} \stackrel{\eqref{eqn:norms}}{\leq} 2 \|f\|_{L_1(\tilde{\nu})}.
\end{equation}
It follows that
\begin{eqnarray*}
	\|f\|_{L_1(\nu)} & \leq & \|f\chi_{A_1}\|_{L_1(\nu)}+\|f\chi_{A_2}\|_{L_1(\nu)} \\ &=&
	\|f|_{A_1}\|_{L_1(\nu_{A_1})}+\|f|_{A_2}\|_{L_1(\nu_{A_2})}\\
	& \stackrel{\text{\eqref{eqn:eq1} \& \eqref{eqn:eq2}}}{\leq} &
	c\|f|_{A_1}\|_{L_1(\nu_{1})}+d\|f|_{A_2}\|_{L_1(\nu_{2})} \stackrel{\text{\eqref{eqn:int}}}{\leq} 2(c+d)\|f\|_{L_1(\tilde{\nu})}.
\end{eqnarray*}
This proves inequality~\eqref{eqn:eq3}.

On the other hand, we have
\begin{equation}\label{eqn:eq4}
	\|f\|_{L_1(\tilde{\nu})} \leq
	2\max\{c,d\} \|f\|_{L_1(\nu)}
\end{equation}
for every $f\in {\rm sim}(\Sigma)$. Indeed, observe that
\begin{eqnarray*}
	\|f\|_{L_1(\tilde{\nu})}&\stackrel{\eqref{eqn:norms}}{\leq}&
	2\sup_{A\in \Sigma}\left\|\int_A f \, d\tilde{\nu}\right\|_{Z}\\
	&\stackrel{\text{\eqref{eqn:integral}}}{=}&2 \sup_{A\in \Sigma}\max\left\{\left\|\int_{A\cap A_1}f|_{A_1} \, d\nu_1\right\|_{X_1}, \left\|
	\int_{A\cap A_2}f|_{A_2} \, d\nu_2\right\|_{X_2}\right\} \\
	& \stackrel{\eqref{eqn:norms}}{\leq} &
	2 \max\{\|f|_{A_1}\|_{L_1(\nu_1)},\|f|_{A_2}\|_{L_1(\nu_2)}\} \\
	& \stackrel{\text{\eqref{eqn:eq1} \& \eqref{eqn:eq2}}}{\leq} &
	2 \max\{c\|f|_{A_1}\|_{L_1(\nu_{A_1})},d\|f|_{A_2}\|_{L_1(\nu_{A_2})}\} \\
	&\leq & 2 \max\{c,d\}\|f\|_{L_1(\nu)},
\end{eqnarray*}
as claimed. 

Finally, inequalities~\eqref{eqn:eq3} and~\eqref{eqn:eq4} allow to apply Lemma~\ref{lem:Lipecki} to deduce that
$L_1(\nu)=L_1(\tilde{\nu})$ with equivalent norms.
\end{proof}

\begin{theo}\label{theo:omega1}
Let $X$ be a Banach space, let $(\Omega,\Sigma)$ be a measurable space and let $\nu\in {\rm ca}(\Sigma,X)$. If $\nu$ is atomless
and $L_1(\nu)$ has density character~$\aleph_k$ for some $k\in \N$, then there is $\tilde{\nu}\in {\rm ca}(\Sigma,c_0(\aleph_k))$ such that 
$L_1(\nu)=L_1(\tilde{\nu})$ with equivalent norms.
\end{theo}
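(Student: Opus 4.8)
The plan is to reduce to the homogeneous situation via Maharam's theorem and then reassemble the pieces with Lemma~\ref{lem:glue}. I would start by fixing a Rybakov control measure $\mu$ of~$\nu$, normalized so that $\mu(\Omega)=1$; then $\mu$ is atomless and ${\rm dens}(L_1(\mu))={\rm dens}(L_1(\nu))=\aleph_k$. Maharam's theorem decomposes the measure algebra of~$\mu$ as a direct sum of homogeneous components whose Maharam types are infinite cardinals $\le \aleph_k$. The crucial observation is that, because $k$ is \emph{finite}, the only infinite cardinals $\le \aleph_k$ are $\aleph_0,\aleph_1,\dots,\aleph_k$; collecting the components of each fixed type into a single band and choosing representatives in~$\Sigma$, I obtain a partition $\Omega=\Omega_0\cup\Omega_1\cup\cdots\cup\Omega_k$ into finitely many pairwise disjoint sets of~$\Sigma$ such that, for each~$j$, either $\Omega_j\in\mathcal{N}(\mu)$ or $\mu_{\Omega_j}$ is homogeneous of Maharam type~$\aleph_j$. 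Discarding the null bands, I may assume each $\Omega_j$ is non-null.

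Next I would treat each band separately. Since $\mu_{\Omega_j}$ is a Rybakov control measure of~$\nu_{\Omega_j}$ (it is the restriction to $\Sigma_{\Omega_j}$ of $|x^*\nu|$ for the defining functional~$x^*$, and $\mathcal{N}(\mu_{\Omega_j})=\mathcal{N}(\nu_{\Omega_j})$), property~(ii) from Section~\ref{section:preliminaries} shows that $\nu_{\Omega_j}$ is homogeneous for $j\ge 1$ and atomless with separable $L_1$ for $j=0$. Applying Theorem~\ref{theo:general} to $\nu_{\Omega_j}$ when $j\ge 1$, and Curbera's Theorem~\ref{theo:Curbera} when $j=0$, I obtain vector measures $\tilde{\nu}_j\in{\rm ca}(\Sigma_{\Omega_j},c_0(\aleph_j))$ with $L_1(\nu_{\Omega_j})=L_1(\tilde{\nu}_j)$ and equal norms.

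Finally I would glue the pieces. Using the isometric identification $c_0(\Gamma_1)\oplus_\infty c_0(\Gamma_2)=c_0(\Gamma_1\sqcup\Gamma_2)$ and applying Lemma~\ref{lem:glue} repeatedly---first combining $\Omega_0$ and $\Omega_1$, then adjoining $\Omega_2$, and so on, at most $k$ times---I build a vector measure $\tilde{\nu}\in{\rm ca}(\Sigma,c_0(\Gamma))$ with $\Gamma=\bigsqcup_{j=0}^{k}\aleph_j$ and $L_1(\nu)=L_1(\tilde{\nu})$ with equivalent norms; the equivalence constants accumulate along the iteration, but only finitely often, so they stay finite. Since $\Gamma$ is a finite union of cardinals $\le\aleph_k$ we have $|\Gamma|=\aleph_k$, and $c_0(\Gamma)$ is isometric to $c_0(\aleph_k)$ because $c_0(\cdot)$ depends on its index set only through its cardinality. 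This yields the conclusion.

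The step I expect to be the main obstacle is the Maharam decomposition: realizing the homogeneous bands as honest pairwise disjoint members of~$\Sigma$ covering~$\Omega$, checking that the restricted control measures are still Rybakov control measures of the restricted vector measures, and---above all---exploiting the finiteness of~$k$ to keep the number of bands finite, so that the two-summand Lemma~\ref{lem:glue} can be iterated a bounded number of times. The remaining ingredients (the $c_0$ direct-sum identification and the bookkeeping of the equivalence constants) are routine.
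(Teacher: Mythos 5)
Your proposal is correct and follows essentially the same route as the paper's own proof: fix a Rybakov control measure, use Maharam's theorem (with the finiteness of~$k$) to split $\Omega$ into finitely many homogeneous bands, apply Theorem~\ref{theo:Curbera} on the separable band and Theorem~\ref{theo:general} on the others, and then glue via iterated use of Lemma~\ref{lem:glue} together with the isometry $\left(\bigoplus_i c_0(\aleph_{m_i})\right)_\infty \cong c_0(\aleph_k)$. The only differences are cosmetic bookkeeping (you index bands by type $\aleph_0,\dots,\aleph_k$ and discard null ones, while the paper lists the distinct occurring types $m_1<\dots<m_n=k$), and you spell out the Rybakov-restriction and homogeneity-transfer details that the paper leaves implicit.
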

\begin{proof}
Let $\mu$ be a Rybakov control measure of~$\nu$. Then $\mu$ is atomless
and $L_1(\mu)$ has density character~$\aleph_k$. Therefore, there exists a finite partition $\{A_1,\dots,A_n\}$ of~$\Omega$ consisting of elements of~$\Sigma$
such that, for each $i\in \{1,\dots,n\}$, the restriction of~$\mu$ to~$\Sigma_{A_i}$ is homogeneous and has Maharam type~$\aleph_{m_i}$ for some $m_i \in \N\cup\{0\}$
satisfying $m_1<m_2<\ldots<m_n=k$ (see, e.g., \cite[Section~3]{fre14} or \cite[p.~122, Theorem~7]{lac-J}). Now, for each $i\in \{1,\dots,n\}$
we can apply either Theorem~\ref{theo:Curbera} or Theorem~\ref{theo:general} to~$\nu_{A_i}$ in order to get $\nu_i\in {\rm ca}(\Sigma_{A_i},c_0(\aleph_{m_i}))$ such that 
$L_1(\nu_{A_i})=L_1(\nu_i)$ with equal norms. Let us consider the Banach space
$$
	Y:=\left(\bigoplus_{i=1}^n c_0(\aleph_{m_i})\right)_{\infty},
$$
which is isometric to~$c_0(\aleph_k)$. Finally, we can apply inductively Lemma~\ref{lem:glue}
to get $\tilde{\nu}\in {\rm ca}(\Sigma,Y)$ such that $L_1(\nu)=L_1(\tilde{\nu})$ with equivalent norms.
\end{proof}

Let $X$ be a Banach space, let $(\Omega,\Sigma)$ be a measurable space and let $\nu:\Sigma \to X$ be a map. 
The Orlicz-Pettis theorem (see, e.g., \cite[p.~22, Corollary~4]{die-uhl-J}) implies that $\nu \in {\rm ca}(\Sigma,X)$ if and only if 
the composition of $\nu$ with each $x^*\in X^*$ is countably additive. Diestel and Faires~\cite{die-fai} (cf. \cite[p.~23, Corollary~7]{die-uhl-J}) proved that
if $X$ contains no closed subspace isomorphic to~$\ell_\infty$ and $\Delta \sub X^*$ is a total set (i.e., $\bigcap_{x^*\in \Delta}\ker x^*=\{0\}$),
then $\nu \in {\rm ca}(\Sigma,X)$ if and only if the composition of $\nu$ with each $x^*\in \Delta$ is countably additive. As an application, we get part~(i)
of the following result, which also collects further properties of $c_0(\kappa)$-valued vector measures.

\begin{theo}\label{theo:remarks}
Let $(\Omega,\Sigma)$ be a measurable space and let $\nu:\Sigma \to c_0(\kappa)$ be a map, where $\kappa$ is a cardinal.
For each $\alpha<\kappa$, let $e_\alpha^*\in c_0(\kappa)^*$ be the $\alpha$th-coordinate projection and let
$\nu_\alpha: \Sigma \to \mathbb R$ be the composition of $\nu$ with $e_\alpha^*$. 
The following statements hold: 
\begin{enumerate}
\item[(i)] $\nu\in {\rm ca}(\Sigma,c_0(\kappa))$ if and only if $\nu_\alpha\in {\rm ca}(\Sigma,\mathbb R)$
for all $\alpha<\kappa$. 
\item[(ii)] If $\nu\in {\rm ca}(\Sigma,c_0(\kappa))$, then:
\begin{enumerate}
\item[(ii.a)] There is a countable set $\Gamma \sub \kappa$ such that $\bigcap_{\alpha\in \Gamma}\mathcal{N}(\nu_\alpha) \sub \mathcal{N}(\nu)$.
\item[(ii.b)] For each $\epsilon>0$ there is a countable partition
$\kappa=\bigcup_{n\in \N} \Gamma_{n,\epsilon}$ such that for every $n\in \N$ and for every $A\in \Sigma$ the set
$\{\alpha\in \Gamma_{n,\epsilon}: |\nu_\alpha(A)|>\epsilon\}$ has cardinality less than~$n$.
\end{enumerate}
\end{enumerate}
\end{theo}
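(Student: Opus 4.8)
The plan is to handle the three assertions separately, the first two being short and (ii.b) carrying essentially all the difficulty. For (i), the forward implication is immediate: each $e_\alpha^*$ is continuous, so $\nu_\alpha=e_\alpha^*\circ\nu$ inherits countable additivity. For the converse I would simply invoke the Diestel--Faires theorem recalled just before the statement, with $\Delta:=\{e_\alpha^*:\alpha<\kappa\}$; this set is total in $c_0(\kappa)^*=\ell_1(\kappa)$, and $c_0(\kappa)$ contains no isomorphic copy of $\ell_\infty$ because $c_0(\kappa)$ is Asplund (its dual $\ell_1(\kappa)$ has the Radon--Nikod\'ym property) whereas $\ell_\infty$ is not. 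For (ii.a) the idea is to exploit that $c_0(\kappa)^*=\ell_1(\kappa)$: a Rybakov control measure of $\nu$ has the form $\mu=|x^*\nu|$ with $x^*=(c_\alpha)_{\alpha<\kappa}\in\ell_1(\kappa)$, and every element of $\ell_1(\kappa)$ has countable support. Taking $\Gamma:=\{\alpha:c_\alpha\neq0\}$, if $A\in\bigcap_{\alpha\in\Gamma}\mathcal N(\nu_\alpha)$ then $x^*\nu$ vanishes on $\Sigma_A$, so $\mu(A)=|x^*\nu|(A)=0$; since $\mathcal N(\mu)=\mathcal N(\nu)$ we get $A\in\mathcal N(\nu)$, which is exactly $\bigcap_{\alpha\in\Gamma}\mathcal N(\nu_\alpha)\sub\mathcal N(\nu)$.

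For (ii.b), fix $\epsilon>0$ and a control measure $\mu$ (say $\mu(\Omega)=1$), and set $g_\alpha:=d\nu_\alpha/d\mu\in L_1(\mu)$, so that $\nu_\alpha(A)=\int_A g_\alpha\,d\mu$ and $\sup_\alpha\|g_\alpha\|_{L_1(\mu)}\le\|\nu\|(\Omega)<\infty$. Since simple functions are uniformly dense in $L_\infty(\mu)$ and $c_0(\kappa)$ is closed in $\ell_\infty(\kappa)$, the hypothesis $\nu(A)\in c_0(\kappa)$ upgrades to
$$
T_\phi:=\Big(\int_\Omega \phi\, g_\alpha\,d\mu\Big)_{\alpha<\kappa}\in c_0(\kappa)\qquad\text{for every }\phi\in L_\infty(\mu).
$$
The engine of the whole argument will be a \emph{no-persistence} lemma: there is no sequence of distinct coordinates $(\alpha_n)_{n\in\N}$ and sets $(A_m)_{m\in\N}$ in $\Sigma$ with $|\nu_{\alpha_n}(A_m)|>\epsilon$ for all $m\ge n$. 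I would prove this by weak$^*$ compactness of $B_{L_\infty(\mu)}=B_{(L_1(\mu))^*}$: choosing a free ultrafilter $\mathcal U$ on $\N$ and setting $\langle\psi,h\rangle:=\lim_{m\to\mathcal U}\int_{A_m}h\,d\mu$ defines some $\psi\in B_{L_\infty(\mu)}$, and because $\{m:m\ge n\}\in\mathcal U$ one gets $|\langle\psi,g_{\alpha_n}\rangle|=|\lim_{m\to\mathcal U}\nu_{\alpha_n}(A_m)|\ge\epsilon$ for every $n$; thus $T_\psi$ would have infinitely many coordinates of modulus $\ge\epsilon$, contradicting $T_\psi\in c_0(\kappa)$.

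From this lemma I would build the partition by a rank construction. Call a finite set $F\sub\kappa$ \emph{$\epsilon$-linked} if some $A\in\Sigma$ has $|\nu_\alpha(A)|>\epsilon$ for all $\alpha\in F$; these form a downward-closed family, and the target is exactly a partition $\kappa=\bigcup_{n\in\N}\Gamma_{n,\epsilon}$ in which $\Gamma_{n,\epsilon}$ contains no $\epsilon$-linked set of size $n$. Fixing a well-ordering of $\kappa$ and writing each $\epsilon$-linked set as its increasing enumeration yields a tree of finite sequences under end-extension, and the no-persistence lemma says precisely that this tree has no infinite branch (an infinite branch all of whose initial segments are linked is a persistence configuration). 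Well-foundedness then supplies an ordinal rank, which I would use to assign colours so that within any $\epsilon$-linked set no colour is overused. I expect the hard part to be exactly this last step: turning the (possibly transfinite) well-founded rank into an honestly $\N$-valued colouring that keeps colour $n$ to fewer than $n$ occurrences on \emph{every} linked set, uniformly over the arbitrary cardinal $\kappa$; the disjoint-blocks phenomenon (a coordinate linked with arbitrarily large pairwise disjoint families) shows that naive greedy colourings fail, so the compression to countably many classes must be driven by the well-foundedness rather than by any uniform finite bound on $\sup_A\#\{\alpha:|\nu_\alpha(A)|>\epsilon\}$. Finally, the negative part of the linking is handled symmetrically (replacing $g_\alpha$ by $-g_\alpha$) and the two partitions are merged by a standard common refinement, at the cost of doubling the indices.
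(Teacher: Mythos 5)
Your treatment of (i) and (ii.a) is correct and essentially the paper's own: (i) is the Diestel--Faires theorem applied to the total set $\{e_\alpha^*:\alpha<\kappa\}$ (the paper rules out copies of $\ell_\infty$ by noting that $c_0(\kappa)$ is weakly compactly generated, you by Asplundness of $c_0(\kappa)$; both are valid), and (ii.a) is exactly the countable-support argument in $c_0(\kappa)^*=\ell_1(\kappa)$. The gap is in (ii.b), and it is located precisely at the step you yourself flag as ``the hard part''.

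Your no-persistence lemma is correct as stated (including the upgrade to $T_\phi\in c_0(\kappa)$ for all $\phi\in L_\infty(\mu)$, which works because $\sup_\alpha\|g_\alpha\|_{L_1(\mu)}\leq\|\nu\|(\Omega)<\infty$), but it is provably too weak to drive any rank/colouring construction. Note that the lemma uses nothing about $\nu$ beyond the fact that the weak (equivalently, bounded pointwise) closure $K$ of $\{\nu(A):A\in\Sigma\}$ is a weakly compact subset of~$c_0(\kappa)$: if $|\nu_{\alpha_n}(A_m)|>\epsilon$ for all $m\geq n$, then any pointwise cluster point $x\in K$ of the sequence $(\nu(A_m))_{m}$ satisfies $|x(\alpha_n)|\geq\epsilon$ for all~$n$, contradicting $x\in c_0(\kappa)$. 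On the other hand, by Farmaki's theorem \cite{far} the conclusion (ii.b) is \emph{exactly} the assertion that $K$ is uniform Eberlein compact. Hence any derivation of (ii.b) from the no-persistence lemma alone --- in particular, any colouring extracted from the well-foundedness or ordinal rank of the tree of $\epsilon$-linked sets, since that well-foundedness is just a restatement of the lemma --- would prove that \emph{every} weakly compact subset of $c_0(\kappa)$ is uniform Eberlein in its weak topology. That is false: by Amir--Lindenstrauss every Eberlein compact space is homeomorphic to a weakly compact subset of some $c_0(\Gamma)$, and Benyamini--Starbird constructed Eberlein compacta that are not uniform Eberlein. Convexity does not rescue you either: replacing the range by $\{T_\phi:\phi\in B_{L_\infty(\mu)}\}$ and invoking Krein's theorem still only yields a weakly compact convex subset of $c_0(\kappa)$, and closed subsets of uniform Eberlein compacta are uniform Eberlein, so non-uniform-Eberlein examples of this convex kind exist as well. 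In other words, the countable additivity of $\nu$ must be used again beyond what the no-persistence lemma retains. This is what the paper does: it quotes \cite[Corollary~2.3]{rod15}, which says that the weak closure of the range of a countably additive $c_0(\kappa)$-valued measure is uniform Eberlein compact, and then applies Farmaki's characterization \cite{far} (cf. \cite[Corollary~6.33(i)]{fab-alt-JJ}). Your operator $T$ is actually the right object for that finer result: the standard route is that $(B_{L_\infty(\mu)},w^*)$ is uniform Eberlein because $L_1(\mu)$ is Hilbert-generated, that $T$ is weak$^*$-to-weak continuous, and that continuous images of uniform Eberlein compacta are uniform Eberlein (Benyamini--Rudin--Wage); so the repair is to push the functional analysis, not the combinatorics of well-foundedness.
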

\begin{proof} (i) This follows from the aforementioned result of Diestel and Faires applied to the total set $\Delta:=\{e_\alpha^*:\alpha<\kappa\} \sub c_0(\kappa)^*$
(bear in mind that weakly compactly generated Banach spaces, like $c_0(\kappa)$, contain no closed subspace isomorphic to~$\ell_\infty$).

(ii.a) Let $\mu$ be a Rybakov control measure of~$\nu$. Since $c_0(\kappa)^*=\ell_1(\kappa)$, there is $\varphi\in \ell_1(\kappa)$
such that $\mu=|\varphi\circ \nu|$. The set $\Gamma:=\{\alpha<\kappa:\varphi(\alpha)\neq 0\}$ is countable
and $(\varphi\circ \nu)(A)=\sum_{\alpha \in \Gamma}\varphi(\alpha)\nu_\alpha(A)$ for every $A\in \Sigma$, the series being
absolutely convergent. Clearly, the inclusion $\bigcap_{\alpha\in \Gamma}\mathcal{N}(\nu_\alpha) \sub \mathcal{N}(\mu)=\mathcal{N}(\nu)$ holds.

(ii.b) Let $K \sub c_0(\kappa)$ be the weak closure of the set $\{\nu(A):A\in \Sigma\}$.
Then $K$ is weakly compact (see, e.g., \cite[p.~14, Corollary~7]{die-uhl-J})  and, in fact, it is uniform Eberlein compact (i.e., it is homeomorphic to a weakly compact subset of a Hilbert space)
when equipped with the weak topology, \cite[Corollary~2.3]{rod15}. The conclusion follows from Farmaki's characterization of those compact subsets of sigma-products
which are uniform Eberlein compact, see \cite{far} (cf. \cite[Corollary~6.33(i)]{fab-alt-JJ}).
\end{proof}

\subsection*{Acknowledgements} The research was supported by grants PID2021-122126NB-C32 
(funded by MCIN/AEI/10.13039/501100011033 and ``ERDF A way of making Europe'', EU) and 
21955/PI/22 (funded by {\em Fundaci\'on S\'eneca - ACyT Regi\'{o}n de Murcia}).


\begin{thebibliography}{10}

\bibitem{buk-alt}
A.~V. Bukhvalov, A.~I. Veksler, and G.~Ya. Lozanovskij, \emph{Banach
  lattices - some {B}anach aspects of the theory}, Russ. Math. Surv. \textbf{34} (1979), no.~2, 159--212.

\bibitem{cur1}
G.~P. Curbera, \emph{Operators into {$L\sp 1$} of a vector measure and
  applications to {B}anach lattices}, Math. Ann. \textbf{293} (1992), no.~2,
  317--330. 

\bibitem{cur2}
G.~P. Curbera, \emph{When {$L\sp 1$} of a vector measure is an {AL}-space}, Pacific
  J. Math. \textbf{162} (1994), no.~2, 287--303. 

\bibitem{die-fai}
J.~Diestel and B.~Faires, \emph{On vector measures}, Trans. Amer. Math. Soc.
  \textbf{198} (1974), 253--271. 

\bibitem{die-uhl-J}
J.~Diestel and J.~J. Uhl, Jr., \emph{Vector measures},  Mathematical Surveys, No.~15, American Mathematical
  Society, Providence, R.I., 1977. 

\bibitem{dep-alt}
P.~G. Dodds, B.~de~Pagter, and W.~Ricker, \emph{Reflexivity and order
  properties of scalar-type spectral operators in locally convex spaces},
  Trans. Amer. Math. Soc. \textbf{293} (1986), no.~1, 355--380. 

\bibitem{fab-ultimo}
M.~Fabian, P.~Habala, P.~H{\'a}jek, V.~Montesinos, and V.~Zizler, \emph{Banach
  space theory. The basis for linear and nonlinear analysis}, CMS Books in Mathematics, Springer, New York, 2011.

\bibitem{far}
V.~Farmaki, \emph{The structure of {E}berlein, uniformly {E}berlein and
  {T}alagrand compact spaces in {$\Sigma( {\bf R}\sp \Gamma)$}}, Fund. Math.
  \textbf{128} (1987), no.~1, 15--28. 

\bibitem{fre14}
D.~H. Fremlin, \emph{Measure algebras}, Handbook of {B}oolean algebras, {V}ol.\
  3, North-Holland, Amsterdam, 1989, pp.~877--980. 

\bibitem{freMT-2}
D.~H. Fremlin, \emph{Measure theory. {V}ol. 2. Broad foundations}, Torres Fremlin, Colchester, 2003.

\bibitem{fab-alt-JJ}
P.~H{\'a}jek, V.~Montesinos~Santaluc{\'{\i}}a, J.~Vanderwerff, and V.~Zizler,
  \emph{Biorthogonal systems in {B}anach spaces}, CMS Books in
  Mathematics, Springer, New York,
  2008.

\bibitem{lac-J}
H.~E. Lacey, \emph{The isometric theory of classical {B}anach spaces}, Die Grundlehren der mathematischen
  Wissenschaften, Band 208, Springer-Verlag, New York, 1974. 

\bibitem{lin-tza-2}
J.~Lindenstrauss and L.~Tzafriri, \emph{Classical {B}anach spaces {II}. Function spaces},
  Ergebnisse der Mathematik und ihrer Grenzgebiete, vol.~97, Springer-Verlag, Berlin, 1979.

\bibitem{lip}
Z.~Lipecki, \emph{Semivariations of a vector measure}, Acta Sci. Math. (Szeged)
  \textbf{76} (2010), no.~3-4, 411--425. 

\bibitem{nyg-rod}
O.~Nygaard and J.~Rodr\'{\i}guez, \emph{Isometric factorization of vector
  measures and applications to spaces of integrable functions}, J. Math. Anal.
  Appl. \textbf{508} (2022), no.~1, paper no. 125857, 16~p. 

\bibitem{oka-alt}
S.~Okada, W.~J. Ricker, and E.~A. S{\'a}nchez~P{\'e}rez, \emph{Optimal domain
  and integral extension of operators. Acting in function
  spaces}, Operator Theory: Advances and Applications, vol. 180, Birkh\"auser Verlag, Basel, 2008. 

\bibitem{oka-rod-san}
S.~Okada, J.~Rodr\'{\i}guez, and E.~A. S\'{a}nchez-P\'{e}rez, \emph{On vector
  measures with values in {$\ell_\infty $}}, Studia Math. \textbf{274} (2024),
  no.~2, 173--199. 

\bibitem{rod15}
J.~Rodr\'{\i}guez, \emph{Factorization of vector measures and their integration
  operators}, Colloq. Math. \textbf{144} (2016), no.~1, 115--125. 

\bibitem{rod16}
J.~Rodr\'{\i}guez, \emph{On non-separable {$L^1$}-spaces of a vector measure}, Rev. R.
  Acad. Cienc. Exactas F\'{\i}s. Nat. Ser. A Mat. RACSAM \textbf{111} (2017),
  no.~4, 1039--1050. 

\end{thebibliography}

\bibliographystyle{amsplain}

\end{document}